\documentclass[11pt]{amsart}

\usepackage{amsmath,amssymb,amsthm}
\usepackage[margin=1.1in]{geometry}
\usepackage{microtype}
\usepackage{xcolor}
\usepackage{hyperref}
\hypersetup{colorlinks=true,linkcolor=blue!50!black,citecolor=blue!50!black,urlcolor=blue!50!black}

\newtheorem{theorem}{Theorem}[section]
\newtheorem{lemma}[theorem]{Lemma}

\newtheorem{corollary}[theorem]{Corollary}
\theoremstyle{definition}
\newtheorem{remark}[theorem]{Remark}

\newcommand{\eqdef}{\mathrel{\mathop:}=}

\newcommand{\abs}[1]{\lvert #1\rvert}

\begin{document}

\title[Tuza's Ryser-conjecture claim at $(4,2)$]
{Tuza's Ryser-conjecture claim for four-partite hypergraphs with matching number two}

\author{Patrick White}
\address{Independent researcher}
\email{p@pwhite.org}

\begin{abstract}
We prove that every $4$-partite $4$-uniform hypergraph $H$ with matching number $\nu(H)=2$ satisfies $\tau(H)\le 6$, where $\tau$ denotes the vertex-cover number.  This confirms a claim made by Tuza in his 1979 manuscript but never published with a proof, and closes the case $(r,\nu)=(4,2)$ of Ryser's conjecture.  The best previous bound was $\tau\le 7$, an integrality consequence of the theorem of Haxell and Scott (2012).  Our argument uses Gy\'arf\'as's intersecting-case theorem, a short projection lemma, and K\H{o}nig's matching theorem.
\end{abstract}

\maketitle

%------------------------------------------------------------------
\section{Introduction}
%------------------------------------------------------------------

Ryser's conjecture (1971) asserts that every $r$-partite $r$-uniform hypergraph $H$ satisfies
\begin{equation}\label{eq:ryser}
  \tau(H) \le (r-1)\,\nu(H),
\end{equation}
where $\tau(H)$ is the minimum size of a vertex cover and $\nu(H)$ is the maximum size of a matching (a set of pairwise disjoint edges).  The conjecture is known for $r=3$ (Aharoni~\cite{Aharoni}, via topological methods) and for the intersecting case $\nu=1$ at $r\le 5$ (Gy\'arf\'as for $r=3,4$; Tuza for $r=5$; see~\cite{KKT}).  For general $r$ and $\nu$, the best bound is $\tau \le (r-\varepsilon)\nu$ for $r\in\{4,5\}$ and some $\varepsilon>0$, due to Haxell and Scott~\cite{HS}; integrality gives $\tau\le r\nu-1$ in these cases, one unit above Ryser's predicted $(r-1)\nu$.

Tuza's 1979 manuscript~\cite{Tuza79} (published in~\cite{Tuza83}) claimed the case $(r,\nu)=(4,2)$ of~\eqref{eq:ryser} --- namely $\tau\le 6$ --- among his solved cases, but no proof was given.  The published solved list in~\cite{Tuza83} covers $(r,\nu)\in\{(3,1),(3,2),(3,3),(3,4),(4,1),(5,1)\}$; the case $(4,2)$ is absent.  DeBiasio, Kamel, McCourt, and Sheats~\cite{DKMS} record the claim explicitly and note the missing proof.

We settle the question.

\begin{theorem}\label{thm:main}
Every $4$-partite $4$-uniform hypergraph $H$ with $\nu(H)=2$ satisfies $\tau(H)\le 6$.
\end{theorem}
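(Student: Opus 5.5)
Fix a maximum matching $\{e,f\}$ in $H$, with parts $V_1,\dots,V_4$, and write $e=\{a_1,a_2,a_3,a_4\}$ and $f=\{b_1,b_2,b_3,b_4\}$, where $a_i,b_i\in V_i$. Every edge of $H$ meets $e\cup f$.

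The plan is to split the edges according to whether they avoid $e$ or avoid $f$, cover each class with a small set, and then handle the edges meeting both. Let $H_e$ be the edges disjoint from $f$ (so they meet $e$), and $H_f$ the edges disjoint from $e$. Any two edges of $H_e$ intersect: two disjoint ones together with $f$ would give a matching of size $3$. So $H_e$ is intersecting, and likewise $H_f$. Gy\'arf\'as's theorem then gives covers of size $3$ for each, hence a cover of size $6$ for $H_e\cup H_f$. The difficulty is the remaining edges, which meet both $e$ and $f$. So I would not use Gy\'arf\'as blindly. Instead I would exploit the slack in how the three cover vertices are chosen, together with a projection argument.

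The key step is a \emph{projection lemma}. Consider the edges meeting both $e$ and $f$. Choose a pair of coordinates $\{i,j\}$ and project each such edge onto its trace on $(e\cup f)\cap(V_i\cup V_j)$. One then obtains a bipartite-type structure on the vertex set $\{a_i,b_i,a_j,b_j\}$ and related vertices, whose covers can be computed by K\H{o}nig's theorem. Concretely, I would try to show the following. Either (a) some part $V_i$ has both $a_i,b_i$ lying in a cover that we can afford, so that the pair $\{a_i,b_i\}$ kills all cross edges through coordinate $i$ and reduces the problem to a $3$-partite residual with $\nu\le2$; or (b) the intersecting families $H_e,H_f$ admit Gy\'arf\'as covers of size $3$ that can be chosen to contain $a_i$, respectively $b_i$, for a prescribed $i$. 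In case (b) the $6$ vertices also catch all cross edges. For the reduction in (a) I would rely on Aharoni's $r=3$ bound in a weak form, or more simply on K\H{o}nig's theorem applied to a bipartite graph on $V_j\times V_k$ formed by the traces of the uncovered edges. In that bipartite graph, a matching of size $3$ would lift, via the fixed coordinates, to a matching of size $3$ in $H$, a contradiction. Hence its vertex cover number is at most $2$, and with $4$ more vertices this totals $6$.

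The order of steps is as follows. (1) Set up $e,f$ and the classes $H_e$, $H_f$, and the cross edges. (2) Prove the projection lemma: after deleting a suitable pair of vertices, the uncovered edges project injectively enough that matchings lift. (3) Run K\H{o}nig on the projected bipartite graph to get a $2$-cover. (4) Combine with Gy\'arf\'as covers of size $2$ obtained for the intersecting subfamilies once a vertex has been forced. The main obstacle is step (2): a matching in the projection lifts only if the lifted edges are disjoint in the unprojected coordinates. Controlling this requires choosing the maximum matching $\{e,f\}$ extremally, for example to maximize the number of edges meeting only one of $e,f$, and a case analysis on which vertices of $e\cup f$ the Gy\'arf\'as covers use. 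I would first try choosing $i$ to be a coordinate where $a_i$ has maximum degree in $H_e$, and hope that the residual structure is forced into the $3$-partite bipartite regime.
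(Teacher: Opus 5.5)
Your setup is the natural one: $H_e$ and $H_f$ are intersecting, so each has a $3$-cover by Gy\'arf\'as. But there is a genuine gap. Everything difficult is deferred to the cross edges, and the concrete claims you make about them are false as stated.

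\textbf{Alternative (b).} Knowing that $3$-covers of $H_e$ and $H_f$ contain $a_i$ and $b_i$ does not catch the cross edges. An edge meeting $e$ only at $a_j$ and $f$ only at $b_k$, with $i,j,k$ distinct, need not meet any of the six chosen vertices. Nothing guarantees that a minimum cover of $H_e$ contains a prescribed vertex in the first place.

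\textbf{Alternative (a).} Deleting $\{a_i,b_i\}$ does not leave a $3$-partite hypergraph, since the surviving edges still have a vertex in $V_i$. So Aharoni's theorem is unavailable. The lifting claim behind your K\H{o}nig step is also simply false: disjointness of traces on $V_j\times V_k$ says nothing about the other two coordinates. For example, edges $(a_t,b_t,c,d_t)$, $t=1,2,3$, through a common vertex $c\in V_3$ form an intersecting family whose traces on $V_1\times V_2$ are a matching of size $3$. More generally, $\tau\le\nu(B_\varnothing)$ by K\H{o}nig. Hence deleting two vertices from any example with $\nu=2$ and $\tau=6$ leaves a family whose two-coordinate projection has matching number at least $4$. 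No extremal choice of $\{e,f\}$ will make projected matchings lift.

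\textbf{Step (4).} ``Gy\'arf\'as covers of size $2$ once a vertex has been forced'' is likewise unsupported. So the six-vertex budget is never actually met.

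\textbf{The missing idea.} You need to use $D(e)$ for \emph{every} edge $e$, not just the two edges of a fixed matching, and to use projections in the opposite direction. Your $H_f$ is exactly $D(e)$, and $e$ together with any cover of $D(e)$ covers all of $H$. This gives $\tau(H)\le 7$ for free, and if $\tau(H)\ge 7$ then $\tau(D(e))=3$ for every $e\in H$. The paper then argues as follows.
\begin{enumerate}
\item A projection lemma: an intersecting family containing four edges whose $V_1V_2$-traces form a matching has $\tau\le 2$. So no edge of $H$ is disjoint from four edges with pairwise disjoint bases.
\item Assuming $\tau(H)\ge 7$, the K\H{o}nig min--max identity with $S=\varnothing$ supplies seven such edges.
\item Every edge of $H$ meets at least four of these seven, so its $V_3V_4$-label meets at least two of their labels.
\item The seven labels have matching number at most $2$, hence a $2$-vertex cover. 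A short counting argument then covers $H$ with at most $5$ vertices, a contradiction.
\end{enumerate}
So a large projected matching is not exploited by lifting it to a matching of $H$, which fails. It is exploited by showing that, inside an intersecting family, it forces a small cover.
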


The proof is short.  Beyond Gy\'arf\'as's intersecting-case theorem $\tau\le 3$ for intersecting $4$-partite $4$-uniform families, the only new ingredient is a projection lemma (Lemma~\ref{lem:proj}) saying that an intersecting family with $\tau\ge 3$ cannot contain four edges whose two-coordinate projections form a matching.  The rest is K\H{o}nig's theorem applied twice.

%------------------------------------------------------------------
\section{Two lemmas}
%------------------------------------------------------------------

Throughout, $H$ is a $4$-partite $4$-uniform hypergraph with parts $V_1,V_2,V_3,V_4$.  For a set $S$ of vertices and an edge $e$, we write $e\perp S$ when $e\cap S=\varnothing$.  Two edges $e,f$ are \emph{disjoint} ($e\perp f$) when $e\cap f=\varnothing$.  For an edge $e$, define
\[
  D(e) \eqdef \{f\in H : f\perp e\}.
\]

We fix a $2|2$ split of the parts, say $(V_1,V_2)\mid(V_3,V_4)$.  For an edge $h$, call $b(h)\eqdef h\cap(V_1\cup V_2)$ its \emph{base} and $\ell(h)\eqdef h\cap(V_3\cup V_4)$ its \emph{label}.  Each is an edge of a complete bipartite graph.  For $S\subseteq V_3\cup V_4$, let $B_S$ be the bipartite graph on $V_1\cup V_2$ in which $xy\in E(B_S)$ precisely when some $h\in H$ has $b(h)=xy$ and $\ell(h)\cap S=\varnothing$.

\begin{lemma}[Bipartite min-max identity]\label{lem:minmax}
$\displaystyle\tau(H) = \min_{S\subseteq V_3\cup V_4}\bigl(\abs{S}+\nu(B_S)\bigr).$
\end{lemma}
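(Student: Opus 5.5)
We prove the identity by two inequalities, with K\H{o}nig's theorem ($\tau=\nu$ for bipartite graphs) supplying the bridge between matchings and covers in $B_S$.

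For $\tau(H)\le\min_S(\abs{S}+\nu(B_S))$, fix any $S\subseteq V_3\cup V_4$. By K\H{o}nig's theorem, $B_S$ has a vertex cover $T\subseteq V_1\cup V_2$ with $\abs{T}=\nu(B_S)$. We claim $S\cup T$ covers $H$. Let $h\in H$. If $\ell(h)\cap S\neq\varnothing$, then $h$ meets $S$. Otherwise $b(h)$ is by definition an edge of $B_S$, so $T$ meets $b(h)\subseteq h$. Hence $\tau(H)\le\abs{S}+\nu(B_S)$, and we minimise over $S$.

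For the reverse inequality, let $C$ be a minimum cover of $H$. Put $S\eqdef C\cap(V_3\cup V_4)$ and $T\eqdef C\cap(V_1\cup V_2)$, so $\abs{C}=\abs{S}+\abs{T}$. We claim $T$ is a vertex cover of $B_S$. Take an edge $xy$ of $B_S$; it is witnessed by some $h\in H$ with $b(h)=xy$ and $\ell(h)\cap S=\varnothing$. Since $C$ covers $h$ but misses $\ell(h)$, $C$ must meet $b(h)=xy$, and that intersection lies in $T$. Therefore $\nu(B_S)\le\tau(B_S)\le\abs{T}$, using only the trivial direction $\nu\le\tau$. This gives $\abs{S}+\nu(B_S)\le\tau(H)$.

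Both directions are essentially definitional. The one point needing care is that the minimum over $S$ is taken with $\nu(B_S)$ rather than $\tau(B_S)$, which is exactly where K\H{o}nig's theorem enters, in the first direction. The min is well defined because $V_3\cup V_4$ may be taken finite (restrict to vertices in edges of $H$), so there is no genuine obstacle.
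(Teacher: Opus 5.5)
Your proof is correct and follows the paper's argument essentially step for step. K\H{o}nig's theorem gives a cover $T$ of $B_S$ with $\abs{T}=\nu(B_S)$, and $S\cup T$ covers $H$; conversely, a minimum cover $C$ splits as $S\cup T$ with $T$ covering $B_S$. You also correctly note that the second direction needs only the trivial inequality $\nu(B_S)\le\tau(B_S)$, whereas the paper writes the equality $\tau(B_S)=\nu(B_S)$; this is a small but accurate refinement.
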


\begin{proof}
Given $S$, let $Q$ be a minimum vertex cover of $B_S$.  By K\H{o}nig's theorem, $\abs{Q}=\nu(B_S)$.  Every hyperedge whose label meets $S$ is covered by $S$; every remaining hyperedge has its base in $E(B_S)$, hence meets $Q$.  So $S\cup Q$ covers $H$, giving $\tau(H)\le\abs{S}+\nu(B_S)$.

Conversely, let $C$ be any cover of $H$, and put $S\eqdef C\cap(V_3\cup V_4)$ and $Q\eqdef C\cap(V_1\cup V_2)$.  Every edge of $B_S$ must meet $Q$, so $\abs{Q}\ge\tau(B_S)=\nu(B_S)$.  Taking the minimum over $C$ gives equality.
\end{proof}

\begin{lemma}[Projection lemma]\label{lem:proj}
Let $F$ be an intersecting $4$-partite $4$-uniform hypergraph.  If $F$ contains four edges whose bases form a matching in $V_1\times V_2$, then $\tau(F)\le 2$.
\end{lemma}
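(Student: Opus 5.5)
The plan is to exploit the fact that a family of pairwise intersecting edges in a bipartite graph is a star. Let $e_1,\dots,e_4\in F$ be the four edges whose bases $b(e_i)$ are pairwise disjoint. Since $F$ is intersecting and the bases are disjoint, the labels $\ell(e_1),\dots,\ell(e_4)$ must pairwise intersect. These labels are edges of the complete bipartite graph on $V_3\cup V_4$. Pairwise intersecting edges of a bipartite graph either share a common vertex or form a triangle, and triangles are impossible. So all four labels share a vertex $v$; by symmetry $v\in V_3$. Write $\ell(e_i)=\{v,y_i\}$ with $y_i\in V_4$.

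The key counting step is as follows. Let $f\in F$ avoid $v$, and let $w$ be its $V_4$-vertex. The base $b(f)$ has two vertices, and each lies in at most one of the disjoint bases $b(e_i)$, so $b(f)$ meets at most two of the $e_i$. Every other $e_i$ must be met through $f$'s $V_4$-vertex, which forces $y_i=w$. Hence every edge avoiding $v$ has its $V_4$-vertex equal to $y_i$ for at least two indices $i$, and in fact for all indices whose base $b(f)$ misses.

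The proof then splits according to the multiset $\{y_1,\dots,y_4\}$.
\begin{itemize}
\item If no value occurs twice, every edge contains $v$, so $\tau(F)=1$.
\item Suppose some value $w$ occurs at least twice and every other value occurs only once. An edge avoiding both $v$ and $w$ meets at most one $e_i$ through its $V_4$-vertex and at most two through its base. That falls short of four, so $\{v,w\}$ is a cover. This handles the multiplicity patterns $4$, $3+1$ and $2+1+1$.
\end{itemize}

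The remaining case, which I expect to be the main obstacle, is the pattern $2+2$: say $y_1=y_2=w$ and $y_3=y_4=w'$. Edges avoiding $v$ fall into two classes.
\begin{itemize}
\item Class $A$ consists of edges containing $w$ whose base meets both $b(e_3)$ and $b(e_4)$.
\item Class $B$ consists of edges containing $w'$ whose base meets both $b(e_1)$ and $b(e_2)$.
\end{itemize}
If $A=\varnothing$ then $\{v,w'\}$ covers $F$, and if $B=\varnothing$ then $\{v,w\}$ covers $F$. So assume $f\in A$ and $g\in B$. Their bases lie in the disjoint sets $b(e_3)\cup b(e_4)$ and $b(e_1)\cup b(e_2)$, and their $V_4$-vertices differ. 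Hence $f$ and $g$ must share their $V_3$-vertex $u$, with $u\ne v$. Moreover, every member of $A$ must meet every member of $B$ in $V_3$, which should force all of $A\cup B$ through a single vertex $u$, or else yield a rigid configuration.

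I would then test the candidate covers $\{v,u\}$ and $\{w,w'\}$. The cover $\{w,w'\}$ handles all edges avoiding $v$. For $\{v,u\}$, I would examine an edge $h\ni v$ avoiding $u$, or avoiding $w$ and $w'$, and intersect it with $f$, $g$ and the $e_i$. The aim is a contradiction, or a proof that one of these two pairs is a cover.
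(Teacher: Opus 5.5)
Your route is the same as the paper's, step for step. You use the star of labels at $v\in V_3$, then the count forcing the $V_4$-vertex of any edge avoiding $v$ to repeat, then the split by multiplicity pattern, and finally the two classes $A,B$ in the $2+2$ case. However, the proposal stops before the $2+2$ case is closed, and the plan you sketch for closing it aims at the wrong objects.

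The dichotomy ``a single vertex $u$, or else a rigid configuration'' is not real. Suppose $A,B\neq\varnothing$. Fix $g_0\in B$ with $V_3$-vertex $u$. Every $f\in A$ must share its $V_3$-vertex with $g_0$, so that vertex is $u$. Now fix $f_0\in A$; every $g\in B$ shares its $V_3$-vertex with $f_0$, so it is also $u$. Hence all of $A\cup B$ passes through the single vertex $u$. You already noted that every edge avoiding $v$ lies in $A\cup B$, so no further configuration analysis is needed.

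There is then nothing left to check for $\{v,u\}$. Your proposed next step, examining edges $h\ni v$ that avoid $u$, is pointless: such edges are covered by $v$ itself. Edges avoiding $v$ contain $u$ by the argument above. So $\{v,u\}$ is a cover, which is exactly the paper's conclusion $\{c,c^*\}$.

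Your other candidate $\{w,w'\}$ is a red herring and can genuinely fail. Take $e_1=(a_1,b_1,v,w)$, $e_2=(a_2,b_2,v,w)$, $e_3=(a_3,b_3,v,w')$, $e_4=(a_4,b_4,v,w')$, $f=(a_3,b_4,u,w)$, $g=(a_1,b_2,u,w')$, and $h=(a_3,b_2,v,z)$ with $z\notin\{w,w'\}$. This family is intersecting, yet $h$ misses $\{w,w'\}$.

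As written, your final case is an open-ended search (``the aim is a contradiction, or a proof that one of these two pairs is a cover''), so the proposal is not yet a proof. The one-line argument that $A\cup B$ lies on a common $V_3$-vertex, together with the remark that edges through $v$ need no checking, completes it.
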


\begin{proof}
Write those four edges as
\[
  f_t = (a_t,\, b_t,\, c_t,\, d_t),\qquad t=1,2,3,4,
\]
where the $a_t$'s are pairwise distinct and the $b_t$'s are pairwise distinct.

\smallskip\noindent
\textbf{The labels form a star.}
Since $F$ is intersecting and the bases of the $f_t$'s are pairwise disjoint, the labels $c_t d_t$ are pairwise intersecting edges of the bipartite graph on $V_3\cup V_4$.  A pairwise-intersecting family of edges in a bipartite graph is a star: the only alternative clique in a line graph arises from a triangle, and bipartite graphs have none.  After possibly swapping $V_3$ and $V_4$, all four labels contain a common vertex $c\in V_3$.  Write $f_t=(a_t,b_t,c,d_t)$.

\smallskip
Consider an arbitrary edge $g=(a,b,c',d)\in F$ with $c'\ne c$.  Since $g$ must meet all four $f_t$'s, and its vertices $a$ and $b$ can each meet at most one $f_t$ (the $a_t$'s and $b_t$'s are distinct), the vertex $d$ must equal $d_t$ for at least $4-2=2$ indices~$t$.

\smallskip\noindent
\textbf{Case 1: at most one value occurs with multiplicity $\ge 2$ among $d_1,\ldots,d_4$.}
Every edge of $F$ avoiding $c$ must contain that repeated value $d^*$.  Hence $\{c,d^*\}$ covers $F$.  (If all four $d_t$'s are distinct, no edge of $F$ can avoid $c$, so $\{c\}$ alone covers $F$.)

\smallskip\noindent
\textbf{Case 2: the multiplicity pattern is $2+2$.}
Relabel so that $d_1=d_2=p$ and $d_3=d_4=q$, with $p\ne q$.  An edge $g$ avoiding $c$ and containing $p$ already meets $f_1,f_2$ through~$p$; to meet $f_3,f_4$, its base must be one of $(a_3,b_4)$ or $(a_4,b_3)$.  Similarly, an edge avoiding~$c$ and containing~$q$ has base $(a_1,b_2)$ or $(a_2,b_1)$.

Any such $p$-edge and $q$-edge have disjoint bases and distinct $V_4$-vertices ($p\ne q$).  Since $F$ is intersecting, they must share a $V_3$-vertex; call it $c^*$.  If both classes are nonempty, every edge of $F$ avoiding $c$ contains $c^*$, so $\{c,c^*\}$ covers $F$.  If only one class occurs, every edge avoiding $c$ shares $p$ (or $q$), again giving a two-element cover.
\end{proof}

The contrapositive is the form we use:

\begin{corollary}\label{cor:proj}
If $F$ is an intersecting $4$-partite $4$-uniform hypergraph with $\tau(F)\ge 3$, then the projection of $F$ onto any two parts has matching number at most~$3$.
\end{corollary}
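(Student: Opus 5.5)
The corollary is the contrapositive of Lemma~\ref{lem:proj}, extended from the fixed split $(V_1,V_2)\mid(V_3,V_4)$ to an arbitrary pair of parts. The proof has two short steps.

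\textbf{Step 1: reduce to the split used in the lemma.} Fix two parts $V_i,V_j$ with $i\ne j$. The hypotheses of Lemma~\ref{lem:proj} are symmetric under permuting the four parts: being $4$-partite $4$-uniform, being intersecting, and the value of $\tau$ are all invariant under relabelling the parts. So we may rename the parts so that $\{V_i,V_j\}$ becomes $\{V_1,V_2\}$ and the other two become $V_3,V_4$. This is the only point requiring any care. The lemma's proof uses only that the bases lie in one pair of parts and the labels in the complementary pair, so after relabelling it applies verbatim.

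\textbf{Step 2: apply the contrapositive.} The projection of $F$ onto $V_i\times V_j$ is the bipartite graph whose edges are the pairs $f\cap(V_i\cup V_j)$ for $f\in F$, that is, the bases after relabelling. Suppose this graph had matching number at least $4$. Choose four pairwise disjoint projected pairs, and pick for each one an edge of $F$ projecting onto it. These are four distinct edges of $F$ whose bases form a matching in $V_1\times V_2$. Lemma~\ref{lem:proj} then gives $\tau(F)\le 2$, contradicting the hypothesis $\tau(F)\ge 3$.

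Hence the projection has matching number at most $3$. No step is a real obstacle: the argument is just the contrapositive of Lemma~\ref{lem:proj} together with the relabelling in Step~1.
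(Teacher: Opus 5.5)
Your proof is correct and follows the paper's route: the paper states the corollary as the contrapositive of Lemma~\ref{lem:proj}, treating the split $(V_1,V_2)\mid(V_3,V_4)$ as a choice without loss of generality. Your explicit relabelling of the parts, and your choice of one preimage edge for each of the four disjoint projected pairs, spell out details the paper leaves implicit.
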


%------------------------------------------------------------------
\section{Proof of Theorem~\ref{thm:main}}
%------------------------------------------------------------------

Suppose for contradiction that $\nu(H)=2$ and $\tau(H)\ge 7$.

\smallskip\noindent
\textbf{Step 1: every disjointness neighborhood has cover number three.}
Fix $e\in H$.  If two members of $D(e)$ were disjoint, those two edges together with $e$ would form a matching of size~$3$.  So $D(e)$ is intersecting, and Gy\'arf\'as's theorem gives $\tau(D(e))\le 3$.  If $\tau(D(e))\le 2$ with cover $Q$, then $e\cup Q$ would cover $H$: edges disjoint from $e$ meet $Q$, and every other edge meets $e$.  This gives a cover of size $\le 6$, contradicting $\tau(H)\ge 7$.  Hence
\begin{equation}\label{eq:tauD}
  \tau(D(e))=3\qquad\text{for every }e\in H.
\end{equation}

\smallskip\noindent
\textbf{Step 2: a large projected matching.}
Lemma~\ref{lem:minmax} with $S=\varnothing$ gives $\nu(B_\varnothing)\ge\tau(H)\ge 7$.  Choose seven hyperedges $M=\{m_1,\ldots,m_7\}$ whose bases form a matching in $V_1\times V_2$.

\smallskip\noindent
\textbf{Step 3: every edge meets many members of $M$.}
Fix $e\in H$.  If $e$ were disjoint from four members of $M$, those four would lie in $D(e)$ with bases forming a matching, contradicting~\eqref{eq:tauD} and Corollary~\ref{cor:proj}.  So $e$ meets at least $4$ members of~$M$.

Since the bases of the $m_t$'s form a matching, the two base vertices of $e$ can meet at most two members of~$M$.  Therefore the label of $e$ meets the labels of at least $4-2=2$ members of $M$:
\begin{equation}\label{eq:labelmeets}
  \abs{\{t : \ell(e)\cap\ell(m_t)\ne\varnothing\}} \ge 2.
\end{equation}

\smallskip\noindent
\textbf{Step 4: the seven labels have a small cover.}
Let $L$ be the bipartite multigraph on $V_3\cup V_4$ whose edges are $\ell(m_1),\ldots,\ell(m_7)$.  We have $\nu(L)\le 2$: three pairwise-disjoint labels, together with the pairwise-disjoint bases of the corresponding $m_t$'s, would give a matching of size~$3$ in $H$.  By K\H{o}nig's theorem, $L$ has a vertex cover $T$ with $\abs{T}\le 2$.

\smallskip\noindent
\textbf{Step 5: case analysis on $T$.}

\smallskip
\emph{Case 1: $\abs{T}=1$.}  Say $T=\{c\}$ with $c\in V_3$.  For $y\in V_4$, let $d(y)\eqdef\abs{\{t:\ell(m_t)=cy\}}$.  If an edge $e$ has label avoiding $c$, then~\eqref{eq:labelmeets} forces its $V_4$-vertex $y$ to satisfy $d(y)\ge 2$.  Let $Y\eqdef\{y:d(y)\ge 2\}$.  Since $\sum_y d(y)=7$, we have $\abs{Y}\le 3$.  Every edge of $H$ either contains $c$ or contains a member of $Y$, so $\{c\}\cup Y$ covers $H$ with at most $4$ vertices.

\smallskip
\emph{Case 2: $T=\{c_1,c_2\}$ lies in one part.}  Say $c_1,c_2\in V_3$.  An edge whose label avoids both $c_1$ and $c_2$ meets labels of $M$ only through its $V_4$-vertex $y$; condition~\eqref{eq:labelmeets} gives $d(y)\ge 2$, where $d(y)\eqdef\abs{\{t:y\in\ell(m_t)\}}$.  With $Y\eqdef\{y:d(y)\ge 2\}$, again $\abs{Y}\le 3$, and $\{c_1,c_2\}\cup Y$ covers $H$ with at most $5$ vertices.

\smallskip
\emph{Case 3: $T=\{c,d\}$ has one vertex in each part.}  Say $c\in V_3$, $d\in V_4$.  Every label in $L$ has the form $cy$, $xd$, or $cd$.  For $y\ne d$, let $\alpha_y$ be the multiplicity of $cy$ in $L$; for $x\ne c$, let $\beta_x$ be the multiplicity of $xd$; let $q$ be the multiplicity of $cd$.  Then
\[
  \sum_{y\ne d}\alpha_y + \sum_{x\ne c}\beta_x = 7-q \le 7.
\]

An edge $e$ whose label $xy$ avoids both $c$ and $d$ meets exactly $\alpha_y+\beta_x$ labels of $M$; by~\eqref{eq:labelmeets}, $\alpha_y+\beta_x\ge 2$.  Let $Q$ be the bipartite graph on $(V_3\setminus\{c\})\cup(V_4\setminus\{d\})$ with edge $xy$ when $\alpha_y+\beta_x\ge 2$.  Every label of $H$ avoiding $\{c,d\}$ belongs to $Q$.

If $x_1y_1,\ldots,x_ky_k$ is a matching in $Q$, then
\[
  2k \le \sum_{i=1}^k(\alpha_{y_i}+\beta_{x_i})
     \le \sum_y\alpha_y+\sum_x\beta_x
     \le 7,
\]
since the $x_i$'s and $y_i$'s are distinct.  So $\nu(Q)\le 3$, and K\H{o}nig's theorem gives a vertex cover $U$ of $Q$ with $\abs{U}\le 3$.  Every edge of $H$ either has a label meeting $\{c,d\}$ or has a label in $Q$ (hence meeting~$U$).  Thus $\{c,d\}\cup U$ covers $H$ with at most $5$ vertices.

\smallskip
In every case, $\tau(H)\le 5$, contradicting $\tau(H)\ge 7$.  \qed

%------------------------------------------------------------------
\section{Remarks}
%------------------------------------------------------------------

\begin{remark}[The threshold obstruction]
The bound is tight: the disjoint union of two intersecting $4$-partite $4$-uniform families with $\tau=3$ has $\nu=2$ and $\tau=6$.  The smallest known examples use two copies of the $6$-edge Mansour--Song--Yuster extremal~\cite{MSY} or two copies of the truncated projective plane $\mathrm{PG}(2,3)$ minus a point (Henderson; $9$ edges on $12$ vertices).
\end{remark}

\begin{remark}[Why the proof needs exactly $\nu(B_\varnothing)\ge 7$]
The double copies above have projected matching number $3+3=6$ in every $2|2$ split: each intersecting extremal projects to $K_{3,3}$ minus an edge (matching number~$3$), and the two copies use disjoint vertex sets.  The proof requires $\nu(B_\varnothing)\ge 7$ to fire; the threshold obstruction sits exactly one below.
\end{remark}

\begin{remark}[What the proof uses]
The argument uses three ingredients: Gy\'arf\'as's intersecting theorem $\tau\le 3$ for $r=4$, the projection lemma (Lemma~\ref{lem:proj}), and K\H{o}nig's matching theorem.  It does not use the full Haxell--Scott machinery, the core classification of intersecting Ryser extremals, or any computational search.  The projection lemma is the only new ingredient.  The nearest published relative is the reconstruction of Tuza's intersecting $r=4$ argument in~\cite{DKMS} (\S3): the same hypothesis --- a $2|2$ projection of matching number at least~$4$ --- yields there only the Ryser bound $\tau\le 3$, leaving the two-element cover of Lemma~\ref{lem:proj} unexploited.  No statement of the lemma or its contrapositive appears in the Ryser literature we have checked (Haxell--Scott~\cite{HS}, Mansour--Song--Yuster~\cite{MSY}, the intersecting-extremal classifications, and recent preprints); the one source not checked is the full text of~\cite{Tuza83}, which is not publicly available --- but the reconstruction in~\cite{DKMS} extracts only $\tau\le 3$ from that argument's structure.
\end{remark}

\begin{remark}[General $(r,\nu)$]
The same argument, with Gy\'arf\'as's theorem replaced by the intersecting case at $r=5$ (Tuza; see~\cite{DKMS} for a proof), gives $\tau\le 2r-1=2(r-1)+1$ at $\nu=2$ for $r=5$ --- one above Ryser's $(r-1)\nu$.  Closing this last unit at $(5,2)$ would require a sharper projection lemma or a different idea.  For $r\ge 6$, even the intersecting case is open.
\end{remark}

%------------------------------------------------------------------
\section{Methods}
%------------------------------------------------------------------

The proof was found through four rounds of structured reasoning with GPT-5.6 Sol (OpenAI, model \texttt{gpt-5.6-sol-pro}), each round building on verified output from the previous one.

Round~1 established the bipartite min-max identity (Lemma~\ref{lem:minmax}), the universal tightness condition $\tau(D(e))=3$, and the reduction to a labeled-bipartite lemma.  Round~2 developed the star-capacity analysis and the base-conflict normal forms.  Round~3 identified the affine-plane obstruction (two disjoint-support copies of an $8$-edge family, $\nu=2$, $\tau=6$) and proved that recursive core nesting alone cannot force the result.  Round~4 found the projection lemma (Lemma~\ref{lem:proj}) and the five-sentence proof above, bypassing the entire apparatus of rounds~2--3.

Every structural claim returned by the model was independently verified by the author's own exact computation (MILP via \texttt{scipy.optimize.milp} and brute-force enumeration) before incorporation.  The projection lemma was checked on all five classified intersecting extremals (the three Abu-Khazneh cores, the affine plane $\mathrm{AG}(2,3)$, and its $8$-edge subfamilies), in all six $2|2$ part-splits; the bipartite min-max identity was checked on six examples including the threshold obstruction.  Two corrections to the model's intermediate claims were found and applied during verification (a miscounted projection-matching upper bound, and an incomplete one-level construction).

Claude (Anthropic, model \texttt{claude-sonnet-4-20250514}) served throughout in the framing and verification role: building the independent checker infrastructure, posing each round's scoped target, and declining to accept any claim until independently checked.  We follow the convention of~\cite{Boxcover} in crediting this assistance in the text rather than as authorship.

%------------------------------------------------------------------
\section*{Acknowledgements}
%------------------------------------------------------------------

The author thanks G\'{a}bor S\'ark\"ozy for posing Ryser's conjecture in a form that invited this kind of resolution, and the authors of~\cite{DKMS} for recording Tuza's claim precisely enough to make the gap visible.

\end{document}